\numberwithin{equation}{section}
\newtheorem{theorem}{Theorem}[section]
\newtheorem{lemma}[theorem]{Lemma}
\theoremstyle{remark}
\DeclareMathOperator{\vol}{vol}
\DeclareMathOperator{\sign}{sign}
\title{Local solvability of the Poisson equation for closed $G_2$-structures}
\author{Timothy Buttsworth\thanks{School of Mathematics and Physics, The University of Queensland, St Lucia,~QLD 4072, Australia}~\thanks{Dr Buttsworth is the recipient of an Australian Research Council Discovery Early-Career Researcher Award (DE220100919) funded by the Australian Government.} \\
\small{\texttt{t.buttsworth@uq.edu.au}}
\and Artem Pulemotov\footnotemark[1]~\thanks{Research supported by the Australian Government through the Australian Research Council's Discovery Projects funding scheme (DP220102530).} \\
\small{\texttt{a.pulemotov@uq.edu.au}}}
\begin{document}

\maketitle

\begin{abstract}
\noindent We prove local solvability of the Poisson equation with a positive or negative right-hand side for closed $G_2$-structures.
\end{abstract}

\section{Introduction and the main result}

Consider a smooth oriented manifold $M$ of dimension~7. Given a closed $G_2$-structure $\sigma$ on~$M$, denote its induced Riemannian metric by~$g_\sigma$. Let $\Delta_\sigma$ be the Hodge Laplacian of~$g_\sigma$. The operator
\begin{align}\label{nonlin_Laplacian}
\sigma\mapsto\Delta_\sigma\sigma
\end{align}
plays a major part in the study of special holonomy. It is well-known that $\sigma$ is torsion-free if and only if
\begin{align}\label{Laplace_eq}
\Delta_\sigma\sigma=0.
\end{align}
In this case, the holonomy of $g_\sigma$ lies in~$G_2$. Several authors have investigated the heat flow and the soliton equation corresponding to~\eqref{Laplace_eq}. We refer to the survey~\cite{JL20} for some of the most notable results and open questions on these topics.

As per the classical paradigm, in order to achieve complete understanding of the operator~\eqref{nonlin_Laplacian}, one needs to study the Poisson equation
\begin{align}\label{eq_Poisson}
\Delta_\sigma\sigma=\eta,
\end{align}
where $\eta$ is a given 3-form. In this paper, we obtain an existence theorem for solutions to~\eqref{eq_Poisson} in a neighbourhood of a point~$p\in M$. A similar result for the prescribed Ricci curvaure problem, proven in~\cite{DDT81}, has led to the discovery of the famous DeTurck trick for the Ricci flow. We expect, in the same spirit, that our findings will furnish new insights into the properties of~\eqref{Laplace_eq} and other related equations.

Throughout the paper, the terms \emph{$G_2$-structure} and \emph{form} refer to smooth $G_2$-structures and forms unless indicated otherwise. By definition, a 3-form $\eta$ is positive if it is an oriented $G_2$-structure on~$M$ and negative if $-\eta$ is positive; cf.~\cite[Section~10.1]{DJ00}. Our main result is the following.

\begin{theorem}\label{thm_main}
Assume that $\eta$ is a positive or negative closed 3-form on~$M$. Every point $p\in M$ has a neighbourhood $U$ such that there exists a closed oriented $G_2$-structure $\sigma$ on $U$ satisfying equation~(\ref{eq_Poisson}) on~$U$. 
\end{theorem}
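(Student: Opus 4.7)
My plan is to apply the implicit function theorem to a gauge-fixed version of the equation, in the spirit of the DeTurck trick alluded to in the introduction. I would work in local coordinates near $p$ identified with an open subset of $\mathbb{R}^7$. On a ball $B_r$ of small radius $r$ about $p$, rescale coordinates $y = x/r$ and pull back: the equation $\Delta_\sigma\sigma = \eta$ is geometric and is preserved under rescaling, while the pulled-back datum $\tilde\eta_r$ on the unit ball satisfies $\tilde\eta_{r,ijk}(y) = r^3\eta_{ijk}(ry)$, so $\tilde\eta_r$ becomes arbitrarily small in any fixed H\"older or Sobolev norm as $r\to 0$. The positivity or negativity hypothesis is used to guarantee $\eta(p)\neq 0$ so the data is non-degenerate and the trial solution can be kept positive.

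Parameterize closed 3-forms near the flat positive $G_2$ 3-form $\phi_0$ on the unit ball by $\sigma = \phi_0 + d\beta$, using the Poincar\'e lemma; positivity of $\sigma$ holds automatically for $\beta$ small. The equation becomes $F(\beta) := d\delta_{\phi_0+d\beta}(\phi_0+d\beta) = \tilde\eta_r$, with $F(0) = 0$ because $\phi_0$ is torsion-free. The parameterization has a gauge freedom $\beta \mapsto \beta + d\gamma$ which leaves $\sigma$ unchanged, and I would fix it by imposing the Coulomb condition $\delta\beta = 0$.

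The linearization at $\beta = 0$ is
\[
DF(0)\beta = d\delta_{\phi_0}d\beta + d\bigl(\delta'_{\phi_0}[d\beta]\phi_0\bigr),
\]
a third-order differential operator whose image lies in exact 3-forms and whose principal symbol at $\xi$ takes the form $-|\xi|^2\,\xi\wedge\beta$ plus a contribution from the variation of $*_\sigma$. Combined with the gauge $\delta\beta = 0$, one expects the system to become overdetermined elliptic with symbol surjective onto exact 3-forms. In appropriate H\"older or Sobolev spaces on the unit ball (with suitable boundary behavior), the gauge-fixed linearization becomes a bounded isomorphism. The implicit function theorem at $\beta = 0$, together with the smallness of $\tilde\eta_r$, then yields a small $\beta$ with $F(\beta) = \tilde\eta_r$, and undoing the rescaling produces the desired closed positive $G_2$-structure on $B_r(p)$.

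The principal obstacle is verifying rigorously that the gauge-fixed linearization is genuinely a bounded isomorphism between the chosen function spaces. This requires careful symbol analysis (the $(*_\sigma)'$ correction to $DF(0)$ survives at top order even though $d\phi_0 = 0$) and a choice of boundary conditions on the ball for which the resulting overdetermined elliptic system has trivial kernel and cokernel, with range exactly matching the space of exact 3-forms in which $\tilde\eta_r$ lies.
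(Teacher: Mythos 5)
Your rescaling idea is appealing and genuinely different from what the paper does, but the proposal has a fundamental gap: the Coulomb gauge $\delta\beta=0$ removes only the redundancy in parameterizing the \emph{form} $\sigma=\phi_0+d\beta$ by $\beta$; it does nothing to address the degeneracy of the operator $\sigma\mapsto\Delta_\sigma\sigma$ coming from diffeomorphism invariance, and that degeneracy is precisely what makes the linearization fail to be an isomorphism. Concretely, $\Delta_{f^*\sigma}(f^*\sigma)=f^*(\Delta_\sigma\sigma)$ for any diffeomorphism $f$, so at the torsion-free point $\phi_0$ the linearization annihilates all forms of the shape $\mathcal L_X\phi_0=d(X\lrcorner\phi_0)$. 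If $\beta_X$ is the Coulomb-gauge potential of $d(X\lrcorner\phi_0)$, then $\beta_X$ is a nonzero element of $\ker(DF(0)\circ d)$ for generic $X$. This kernel is infinite-dimensional, so $DF(0)$ is nowhere near an isomorphism, and no choice of boundary conditions will cure it. The ``$(\star_\sigma)'$ correction'' you flag as the principal obstacle is not a technicality to be verified; it is exactly the term that makes the naive linearization non-elliptic, and dealing with it requires modifying the \emph{equation} itself, not just restricting the domain. This is why the paper solves the gauge-fixed equation $\Delta_{\sigma_*}\sigma_*=\varphi^*_{V(\sigma_1,\sigma_*)}\eta$, following Bryant--Xu: the extra $V$-dependent pullback on the right cancels the bad second-order piece of the linearization, yielding an operator that is elliptic in the direction of closed forms.

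A secondary issue: even after the DeTurck modification, the linearized operator around a nearby $G_2$-structure is elliptic but not automatically invertible on a fixed domain such as the unit ball; one would still have to rule out kernel and cokernel contributions coming from boundary conditions. The paper sidesteps this with Lemma~\ref{lem_lin_inv}, constructing a right inverse on closed forms by a contraction-mapping argument on a shrinking geodesic ball $N_s$, which works precisely because the non-constant-coefficient remainder $K$ has vanishing leading coefficients at $p$. Your rescaling makes the datum $\tilde\eta_r$ small, which is a legitimate substitute for the paper's Lemma~\ref{lem_h3} and extension Lemma~\ref{lem_exten}, but it does not by itself give invertibility of the linearization. If you incorporate the Bryant--Xu DeTurck modification and then replace the implicit function theorem with a right-inverse plus Banach-fixed-point argument in the spirit of Lemmas~\ref{lem_Hodge_inv}--\ref{lem_lin_inv}, the approach would have a chance of working; as written, it does not.
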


Let us outline the main ideas behind the proof. While equation~\eqref{eq_Poisson} is not elliptic, it can be ``adjusted" by a diffeomorphism to become elliptic in the direction of closed forms. Similar ``adjustments" were used in~\cite{BX11} to establish short-time existence for the heat flow corresponding to~\eqref{Laplace_eq} and in~\cite[Chaper~5]{AB87} to tackle the prescribed Ricci curvature problem. The proof of Theorem~\ref{thm_main} requires two separate applications of this trick: one to solve equation~\eqref{eq_Poisson} at the point~$p$ and another to make this equation tractable in a neighbourhood of~$p$. After~\eqref{eq_Poisson} has been ``adjusted", we need to leverage its ellipticity in the direction of closed forms to construct a closed $G_2$-structure $\sigma$ that satisfies it. In~\cite{BX11}, a similar task for the heat flow was accomplished by using the Nash--Moser inverse function theorem. However, we take a different approach and adapt the classical local theory of elliptic equations to the setting of closed forms. This strategy relies on the Banach fixed point theorem rather than the Nash--Moser inverse function theorem.

\section{Linear operators on closed forms}\label{sec_cl_forms}

The rest of the paper is devoted to the proof of Theorem~\ref{thm_main}. It will be convenient for us to assume henceforth that $M$ admits a global coordinate system $(x_1,\ldots,x_7)$ centred at~$p$. Since Theorem~\ref{thm_main} only concerns local solvability of equation~(\ref{eq_Poisson}), there is no loss of generality. For an index $i$ and an ordered set of indices $(i_1,\ldots,i_n)$, denote $e_i=\frac\partial{\partial x_i}$ and $e^{i_1\cdots i_n}=dx_{i_1}\wedge\cdots\wedge dx_{i_n}$. Let $g$ be the Euclidean metric on~$M$, i.e., the metric given by the identity matrix in the coordinates $(x_1,\ldots,x_7)$ at every point. 

Our argument requires several general results about linear operators on closed forms. We establish these results in the present section. Consider a bounded contractible closed region ${N\subset M}$ such that $\partial N$ is smooth and $p$ lies in the interior of~$N$. Fix a number $a\in(0,1)$. Given $l\in\mathbb N$ and $m\in\{0,\ldots,7\}$, let $C^{l,a}\Omega^m(N)$ be the $C^{l,a}$-H\"older space of $m$-forms on~$N$. By definition,
\begin{align*}
\|\phi\|_{C^{l,a}\Omega^m(N)}=\sum_{i=0}^l|\nabla^i\phi|_{C^0(N)}+[\,\nabla^l\phi\,]_{C^{0,a}(N)},\qquad \phi\in C^{l,a}\Omega^m(N).
\end{align*}
In this formula, $\nabla$ is the connection on the tensor bundle induced by the Levi-Civita connection of~$g$, $|\cdot|_{C^0(N)}$ is the sup-norm of a tensor field on~$N$ with respect to~$g$, and $[\,\cdot\,]_{C^{0,a}(N)}$ is the H\"older coefficient. Denote
\begin{align*}
C^{l,a}\mathcal X^m(N)&=\{\phi\in C^{l,a}\Omega^m(N)\,|\,d\phi=0\},
\\
C_0^{l,a}\mathcal X^m(N)&=\{\phi\in C^{l,a}\mathcal X^m(N)\,|\,\nabla^i\phi(p)=0\mbox{~for all~}i=0,\ldots,l\}.
\end{align*}
Clearly, these are subspaces of $C^{l,a}\Omega^m(N)$.

Our first result in this section concerns the Hodge Laplacian $\Delta_g$ associated with~$g$. Roughly speaking, we show that $\Delta_g$ has a right inverse taking closed forms to closed forms.

\begin{lemma}\label{lem_Hodge_inv}
There exists a bounded operator
$$R:C_0^{l,a}\mathcal X^3(N)\to C_0^{l+2,a}\mathcal X^3(N)
$$
such that ${\Delta_gR\phi=\phi}$ for all $\phi\in C_0^{l,a}\mathcal X^3(N)$.
\end{lemma}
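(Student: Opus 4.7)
Since $g$ is flat in Cartesian coordinates, the Hodge Laplacian acts as the scalar Laplacian on each component~$\phi_I$ of a form $\phi = \sum_I \phi_I\,e^I$ and commutes with~$d$. My plan is to build $R$ in two stages: first, a bounded right inverse $S\colon C^{l,a}\mathcal X^3(N) \to C^{l+2,a}\mathcal X^3(N)$ of~$\Delta_g$ that preserves closedness; second, a Taylor subtraction at~$p$ to enforce the vanishing of the $(l+2)$-jet at~$p$.

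For the first stage, the idea is that the scalar Newton kernel~$G$ on~$\mathbb R^7$ provides a componentwise inverse of~$\Delta_g$ that commutes with~$d$, because convolution commutes with differentiation. I would construct a compactly supported closed extension $\tilde\phi \in C^{l,a}\Omega^3(\mathbb R^7)$ of~$\phi$ with $\|\tilde\phi\|_{C^{l,a}(\mathbb R^7)} \le C\|\phi\|_{C^{l,a}(N)}$, using the contractibility of~$N$: apply the Poincar\'e lemma to write $\phi = d\alpha$, extend $\alpha$ to a compactly supported $\tilde\alpha$ on~$\mathbb R^7$, and set $\tilde\phi = d\tilde\alpha$. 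Defining $S\phi := (G * \tilde\phi)|_N$ componentwise, we have $dS\phi = G*d\tilde\phi = 0$ on~$N$ (since $d\tilde\phi = 0$ there) and $\Delta_g S\phi = \tilde\phi = \phi$ on~$N$; interior Schauder estimates provide the $C^{l+2,a}(N)$ bound.

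For the second stage, given $\phi \in C_0^{l,a}\mathcal X^3(N)$ and $u = S\phi$, let $P$ be the Taylor polynomial of~$u$ at~$p$ of degree~$l+2$, and set $R\phi := u - P$. Then $R\phi$ has vanishing $(l+2)$-jet at~$p$ by construction; it remains closed because $dP$ is the degree~$l+1$ Taylor polynomial of $du = 0$ at~$p$, hence $dP = 0$; and $\Delta_g R\phi = \phi$ because $\Delta_g P$ is a polynomial of degree at most~$l$ whose $l$-jet at~$p$ coincides with that of $\Delta_g u = \phi$, which vanishes by hypothesis on~$\phi$. Boundedness of~$R$ follows from that of~$S$ together with the elementary estimate $\|P\|_{C^{l+2,a}(N)} \le C\|u\|_{C^{l+2,a}(N)}$.

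The main obstacle I foresee is the regularity loss in the extension step: the cone-formula primitive $\alpha$ only lies in~$C^{l,a}$, so the naive $\tilde\phi = d\tilde\alpha$ is only $C^{l-1,a}$. Recovering the lost derivative requires either a gauge-fixed choice of~$\alpha$ obtained from an auxiliary elliptic Hodge-type boundary value problem on~$N$ (yielding $\alpha \in C^{l+1,a}$ by elliptic regularity) or a more direct regularity-preserving extension of closed forms across~$\partial N$, followed by a careful verification that the overall composition $\phi \mapsto R\phi$ is bounded between the stated H\"older spaces.
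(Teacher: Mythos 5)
Your approach is genuinely different from the paper's in the key construction, and it is essentially sound, modulo the regularity issue you yourself flag. The paper never leaves the domain $N$: it invokes the Hodge--Morrey decomposition of $C^{l+1,a}\Omega^2(N)$, shows that $d$ restricted to the coexact summand is an isomorphism onto $C^{l,a}\mathcal X^3(N)$, and then inverts $\Delta_g$ by solving the first-order boundary-value problem $d\alpha=0$, $\delta\alpha=\beta$, $\nu^\flat\wedge\alpha_{|\partial N}=0$ (\cite[Theorem~7.2]{CDK12}). You instead extend the data to $\mathbb R^7$ and invoke the Newton potential; the price you pay is the need for a regularity-preserving closed extension of $\phi$, whereas the paper avoids extending at all. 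Your second stage (subtracting the degree-$(l+2)$ Taylor polynomial and checking that this preserves closedness and annihilation by $\Delta_g$ up to the correct order) is word-for-word the paper's operator $Q_2$. A small phrasing slip worth correcting: you write $dS\phi=G*d\tilde\phi=0$ on $N$ ``since $d\tilde\phi=0$ there,'' but convolution is nonlocal, so vanishing of $d\tilde\phi$ on $N$ alone would not suffice --- what saves you is that $\tilde\phi=d\tilde\alpha$ is exact on all of $\mathbb R^7$, so $d\tilde\phi\equiv 0$ globally.

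The gap you identify --- that the cone-formula primitive costs a derivative --- is real but closeable, and in fact closeable by exactly the ingredient the paper already uses: the optimal-regularity Poincar\'e lemma of \cite[Section~8.2]{CDK12} produces $\alpha\in C^{l+1,a}\Omega^2(N)$ with $d\alpha=\phi$ and a bound $\|\alpha\|_{C^{l+1,a}}\le c\|\phi\|_{C^{l,a}}$; a Whitney-type extension (cf.\ \cite[Theorem~16.11]{CDK12}, also used in Lemma~\ref{lem_exten}) then yields a compactly supported $\tilde\alpha\in C^{l+1,a}(\mathbb R^7)$, so $\tilde\phi=d\tilde\alpha\in C^{l,a}$ with the right bound. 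With that, your argument goes through. In terms of trade-offs: your route is arguably more elementary (it needs only the fundamental solution and Schauder interior estimates rather than the full Hodge--Morrey machinery and its boundary regularity theorem), but the paper's purely intrinsic construction dispenses with the extension step entirely and keeps all estimates internal to $N$.
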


\begin{proof}
Let $\nu$ be the outward unit normal vector field on~$\partial N$. Denote by $\delta$ the co-differential with respect to~$g$. According to the Hodge--Morrey decomposition theorem (see~\cite[Theorem~6.12 and Remark~6.13]{CDK12}), the space $C^{l+1,a}\Omega^2(N)$ splits into a direct sum as follows:
$$C^{l+1,a}\Omega^2(N)=C^{l+1,a}\mathcal E^2(N)\oplus C^{l+1,a}\mathcal C^2(N)\oplus C^{l+1,a}\mathcal{H}^2(N).$$
On the right-hand side,
\begin{align*}
C^{l+1,a}\mathcal E^2(N)&=\{d\psi\,|\,\psi\in C^{l+2,a}\Omega^1(N)~\mbox{and}~\nu\lrcorner\psi_{|\partial N}=0\}, \\
C^{l+1,a}\mathcal C^2(N)&=\{\delta\psi\,|\,\psi\in C^{l+2,a}\Omega^3(N)~\mbox{and}~\nu\lrcorner\psi_{|\partial N}=0\},\\
C^{l+1,a}\mathcal{H}^2(N)&=\{\psi\in C^{l+1,a}\Omega^2(N) \,|\, d\psi=0,~\delta\psi=0~\mbox{and}~\nu\lrcorner \psi_{|\partial N}=0\}.
\end{align*}
Since $N$ is contractible, $C^{l+1,a}\mathcal{H}^2(N)=\{0\}$ by~\cite[Theorem~6.5]{CDK12}. The kernel of the exterior derivative operator
\begin{align*}
d:C^{l+1,a}\Omega^2(N)\to C^{l,a}\mathcal X^3(N)
\end{align*}
coincides with $C^{l+1,a}\mathcal E^2(N)$. Indeed, if $d\tau=0$ and $\tau\in C^{l+1,a}\mathcal C^2(N)$, then $\tau$ is both closed and co-closed. Moreover, $\nu\lrcorner\tau_{|\partial N}=0$ by~\cite[Theorem~3.23]{CDK12}. Using the contractibility of $N$ one more time, we conclude that $\tau=0$. Thus, the kernel of $d$ must equal $C^{l+1,a}\mathcal E^2(N)$. This fact and the optimal regularity variant of the Poincar\'e lemma (see~\cite[Section~8.2]{CDK12}) imply that the restriction 
$$
d_{|C^{l+1,a}\mathcal C^2(N)}:C^{l+1,a}\mathcal C^2(N)\to C^{l,a}\mathcal X^3(N)
$$ is an isomorphism. 

Given a 2-form $\beta\in C^{l+1,a}\mathcal C^2(N)$, consider the boundary-value problem 
$$d\alpha=0,\qquad \delta\alpha=\beta, \qquad \nu^\flat\wedge\alpha_{|\partial N}=0.$$
According to~\cite[Theorem~7.2]{CDK12}, this problem has a unique solution $\alpha\in C^{l+2,a}\mathcal X^3(N)$. Define
$$
Q_1:C^{l+1,a}\mathcal C^2(N)\to C^{l+2,a}\mathcal X^3(N)
$$
to be the operator that sends $\beta$ to this solution. Invoking~\cite[Theorem~7.2]{CDK12} again, we conclude that $Q_1$ bounded.

Let $\alpha\in C^{l+2,a}\mathcal X^3(N)$ be a 3-form with components $(\alpha_{ijk})_{i,j,k=1}^7$ in the coordinates $(x_1,\ldots,x_7)$. Given $n\in\{1,\ldots,l+2\}$ and a $C^n$-differentiable function $f:N\to\mathbb R$, denote by $\mathcal P_n(f)$ the $n$th Taylor polynomial of~$f$ at the point~$p$ in these coordinates. Set
\begin{align*}
\mathcal P_n(\alpha)=\sum_{i,j,k=1}^7\mathcal P_n(\alpha_{ijk})e^{ijk}.
\end{align*}
The 3-form $\mathcal P_n(\alpha)$ must be closed. Indeed, every component of $d\mathcal P_n(\alpha)$ is the $(n-1)$th Taylor polynomial of the corresponding component of~$d\alpha$.
Define
$$
Q_2:C^{l+2,a}\mathcal X^3(N)\to C_0^{l+2,a}\mathcal X^3(N)
$$
to be the operator that sends $\alpha$ to $\alpha-\mathcal P_{l+2}(\alpha)$. Clearly, $Q_2$ is bounded.

Set $R=Q_2Q_1d_{|C^{l+1,a}\mathcal C^2(N)}^{-1}$. For $\phi\in C_0^{l,a}\mathcal X^3(N)$,
\begin{align*}
\Delta_gR\phi&=\Delta_gQ_1d_{|C^{l+1,a}\mathcal C^2(N)}^{-1}\phi-\Delta_g\mathcal P_{l+2}\big(Q_1d_{|C^{l+1,a}\mathcal C^2(N)}^{-1}\phi\big).
\end{align*}
We have
\begin{align*}
\Delta_gQ_1d_{|C^{l+1,a}\mathcal C^2(N)}^{-1}\phi=d\delta Q_1d_{|C^{l+1,a}\mathcal C^2(N)}^{-1}\phi=dd_{|C^{l+1,a}\mathcal C^2(N)}^{-1}\phi=\phi.
\end{align*}
To prove the lemma, we need to show that
\begin{align}\label{eq_Taylor_van}
\Delta_g\mathcal P_{l+2}\big(Q_1d_{|C^{l+1,a}\mathcal C^2(N)}^{-1}\phi\big)=0.
\end{align}

Denote $\theta=Q_1d_{|C^{l+1,a}\mathcal C^2(N)}^{-1}\phi$. The components of the 3-form $\Delta_g\mathcal P_{l+2}(\theta)$ in the coordinates $(x_1,\ldots,x_7)$ are polynomials in $x_1,\ldots,x_7$ of degree up to~$l$. Since
\begin{align*}
\nabla^n\Delta_g\mathcal P_{l+2}(\theta)(p)=\nabla^n\Delta_g\theta(p)=\nabla^n\phi(p)=0
\end{align*}
for every $n\in\{0,\ldots,l\}$, these polynomials vanish at~$p$. This implies~\eqref{eq_Taylor_van}.
\end{proof}

Given $s>0$, let $N_s$ be the closed geodesic ball with respect to $g$ of radius~$s$ centred at~$p$. Consider a second-order linear differential operator $K$ acting on 3-forms on~$M$. Assume that the coefficients of $K$ with respect to $(x_1,\ldots,x_7)$ are smooth. Our next goal is to study the invertibility of
$$
L=\gamma\Delta_g+K
$$
with $\gamma>0$. In our reasoning, we will exploit the bounded dilation operator
$$
A_{l,s}:C^{l,a}\mathcal X^3(N_1)\to C^{l,a}\mathcal X^3(N_s)
$$
defined for $l\in\mathbb N\cup\{0\}$ by the formula
$$
A_{l,s}\phi(x_1,\ldots,x_7)=\phi(s^{-1}x_1,\ldots,s^{-1}x_7)
$$
in the coordinates $(x_1,\ldots,x_7)$. Clearly, this operator is an isomorphism. The equality
\begin{align}\label{La_comm}
\Delta_gA_{l+2,s}\phi=s^{-2}A_{l,s}\Delta_g\phi
\end{align}
holds for every $\phi\in C^{l+2,a}\mathcal X^3(N_1)$. 

\begin{lemma}\label{lem_lin_inv}
Let the highest-order coefficients of $K$ with respect to $(x_1,\ldots,x_7)$ vanish at~$p$. Assume that the form $L\phi$ is closed for every closed form~$\phi$. If $s$ is sufficiently small, then there exists a bounded operator
$$P:C_0^{l,a}\mathcal X^3(N_s)\to C_0^{l+2,a}\mathcal X^3(N_s)$$
such that $LP\phi=\phi$ for all $\phi\in C_0^{l,a}\mathcal X^3(N_s)$.
\end{lemma}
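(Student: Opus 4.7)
The plan is to rescale the equation from $N_s$ to the unit ball $N_1$ via $A_{l,s}$, so that the top-order part becomes $\gamma\Delta_g$ plus a perturbation of small norm, and then to invert the perturbation by combining Lemma~\ref{lem_Hodge_inv} with a Neumann series.

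Writing $K=\sum_{i,j}a_{ij}\partial_i\partial_j+\sum_i b_i\partial_i+c$ in the coordinates $(x_1,\ldots,x_7)$, with endomorphism-valued smooth coefficients, a direct computation paralleling~(\ref{La_comm}) gives $L A_{l+2,s}\phi=s^{-2}A_{l,s}(\tilde L_s\phi)$, where $\tilde L_s=\gamma\Delta_g+\tilde K_s$ and
\begin{align*}
\tilde K_s\phi=\sum_{i,j}a_{ij}(s\,\cdot\,)\partial_i\partial_j\phi+s\sum_i b_i(s\,\cdot\,)\partial_i\phi+s^2 c(s\,\cdot\,)\phi.
\end{align*}
Since $\nabla^i A_{l,s}\phi(p)=s^{-i}\nabla^i\phi(p)$, the dilation restricts to an isomorphism between the corresponding $C_0$-subspaces on $N_1$ and $N_s$. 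Setting $\psi=A_{l+2,s}\phi$ and $\chi=A_{l,s}\mu$, constructing a bounded right inverse $P$ of $L$ on $N_s$ therefore reduces to solving $\tilde L_s\phi=s^2\mu$ for $\phi\in C_0^{l+2,a}\mathcal X^3(N_1)$, given $\mu\in C_0^{l,a}\mathcal X^3(N_1)$.

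The hypothesis $a_{ij}(p)=0$ forces the function $a_{ij}(s\,\cdot\,)$ to have $C^0(N_1)$-norm $O(s)$, while its $k$-th derivative equals $s^k(\partial^k a_{ij})(s\,\cdot\,)$ and is therefore $O(s^k)$; hence $\|a_{ij}(s\,\cdot\,)\|_{C^{l,a}(N_1)}\le Cs$, and the explicit factors $s$ and $s^2$ in the subprincipal terms produce the analogous bound for those. The standard product estimate in H\"older spaces then yields $\|\tilde K_s\|_{C^{l+2,a}\Omega^3(N_1)\to C^{l,a}\Omega^3(N_1)}\le Cs$. Taking $R$ to be the right inverse from Lemma~\ref{lem_Hodge_inv} on $N_1$, I would look for $\phi$ in the form $\phi=R\xi$ with $\xi\in C_0^{l,a}\mathcal X^3(N_1)$; the equation then becomes $(I+\gamma^{-1}\tilde K_s R)\xi=\gamma^{-1}s^2\mu$, which can be inverted by Neumann series once $s$ is small enough that $\gamma^{-1}\|\tilde K_s R\|<1$. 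The desired operator is
\begin{align*}
P=A_{l+2,s}\,R\,\gamma^{-1}(I+\gamma^{-1}\tilde K_s R)^{-1}s^2\,A_{l,s}^{-1}.
\end{align*}

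The main obstacle I anticipate is verifying that $\tilde K_s R$ actually maps $C_0^{l,a}\mathcal X^3(N_1)$ into itself. Closedness is preserved because both $\Delta_g$ and $L$ preserve closedness (the latter by hypothesis), the dilation sends closed forms to closed forms, and hence $\tilde L_s$ and $\tilde K_s=\tilde L_s-\gamma\Delta_g$ do as well. The vanishing $\nabla^i(\tilde K_s R\xi)(p)=0$ for $i=0,\ldots,l$ follows from the fact that $R\xi$ is $(l+2)$-flat at $p$: every term in the Leibniz expansion of such a derivative at $p$ carries a factor $\nabla^j R\xi(p)$ with $j\le l+2$, and these all vanish.
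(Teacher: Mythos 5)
Your proposal is correct and follows essentially the same route as the paper: rescale to $N_1$, use the right inverse $R$ from Lemma~\ref{lem_Hodge_inv}, exploit the vanishing of the principal coefficients at $p$ to get smallness of the perturbation after dilation, and invert $I+\gamma^{-1}\tilde K_sR$ (the paper instead inverts $I+\gamma^{-1}R\tilde K_s$ on the other side via the Banach fixed point theorem, but that is the same Neumann-series argument). Your treatment of the two side conditions — closedness and $l$-flatness at $p$ — for $\tilde K_sR$ is the right point to check and your justification is sound.
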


\begin{proof}
Choose $s\in(0,1)$ and $\phi\in C_0^{l,a}\mathcal X^3(N_s)$. Since both $L$ and $\Delta_g$ take closed forms to closed forms, so does~$K$. Consider the map 
$$
\mathfrak B_s:C_0^{l+2,a}\mathcal X^3(N_1)\to C_0^{l+2,a}\mathcal X^3(N_1)
$$
given by
\begin{align*}
\mathfrak B_s(\psi)=s^2\gamma^{-1}RA_{l,s}^{-1}\phi-s^2\gamma^{-1}RA_{l,s}^{-1}KA_{l+2,s}\psi,\qquad \psi\in C_0^{l+2,a}\mathcal X^3(N_1),
\end{align*}
where $R$ is the right inverse of $\Delta_g$ produced in Lemma~\ref{lem_Hodge_inv}. Recalling that the highest-order coefficients of $K$ vanish at~$p$, one can easily prove that
\begin{align*}
\big\|A_{l,s}^{-1}KA_{l+2,s}\psi\big\|_{C^{l,a}\Omega^3(N_1)}\le cs^{-1}\|\psi\|_{C^{l+2,a}\Omega^3(N_1)},\qquad \psi\in C_0^{l+2,a}\mathcal X^3(N_1),
\end{align*}
for some $c>0$. Therefore, if $s$ is sufficiently small, the map $\mathfrak B_s$ is a contraction. Denote its unique fixed point by $\psi_*$. Shrinking $s$ further if necessary, we find
\begin{align}\label{BFP_bound}
\|\psi_*\|_{C^{l+2,a}\Omega^3(N_1)}\le 2s^2\gamma^{-1}\big\|RA_{l,s}^{-1}\big\|\|\phi\|_{C^{l,a}\Omega^3(N_s)}.
\end{align}
The formula $P\phi=A_{l+2,s}\psi_*$ defines an operator $P$ from $C_0^{l,a}\mathcal X^3(N_s)$ to $C_0^{l+2,a}\mathcal X^3(N_s)$. Inequality~\eqref{BFP_bound} implies that this operator is bounded. 
Using~\eqref{La_comm}, we compute
\begin{align*}
LP\phi&=\gamma\Delta_gA_{l+2,s}\psi_*+KA_{l+2,s}\psi_*
\\
&=s^{-2}\gamma A_{l,s}\Delta_g\big(s^2\gamma^{-1}RA_{l,s}^{-1}\phi-s^2\gamma^{-1}RA_{l,s}^{-1}KA_{l+2,s}\psi_*\big)+KA_{l+2,s}\psi_*=\phi.
\end{align*}
\end{proof}

Finally, we need the following result based on the extension theorem for H\"older spaces. We assume, for convenience, that $s\in(0,1]$.

\begin{lemma}\label{lem_exten}
Consider a $C^{l+1}$-differentiable closed 3-form $\beta$ on $N_s$. Assume that ${\nabla^i\beta(p)=0}$ for all $i\in\{0,\ldots,l\}$. Given $\epsilon>0$, there exist $r\in(0,s)$ and $\hat\beta\in C^{l,a}\mathcal X^3(N_s)$ such that $\beta_{|N_r}=\hat\beta_{|N_r}$ and ${\|\hat\beta\|_{C^{l,a}\Omega^3(N_s)}<\epsilon}$.
\end{lemma}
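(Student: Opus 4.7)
The plan is to construct $\hat\beta$ as the exterior derivative of a suitably cut-off primitive of $\beta$, which automatically ensures that $\hat\beta$ is closed. Since $N_s$ is a Euclidean ball centred at $p$, hence star-shaped with respect to $p$, I will first apply the classical Poincar\'e homotopy operator $K$ to obtain a 2-form $\alpha=K\beta$ on $N_s$ with $d\alpha=\beta$. The crucial observation is that $K$ contracts the integrand with the radial vector field $\sum_k x^k\partial_k$; this extra factor of $|x|$, combined with the hypothesis $\nabla^i\beta(p)=0$ for $i\le l$, yields the pointwise decay
\[
|\nabla^k \alpha(x)|\le C|x|^{l+2-k},\qquad 0\le k\le l+1,
\]
on a neighbourhood of $p$, where $C$ depends only on $\|\beta\|_{C^{l+1}(N_s)}$.

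Given $\epsilon>0$, I will fix a parameter $r_0\in(0,s)$ to be chosen small, select a smooth cutoff $\chi$ with $\chi\equiv 1$ on $N_{r_0/2}$, $\chi\equiv 0$ outside $N_{r_0}$, and $|\nabla^k\chi|_{C^0(N_s)}\le Cr_0^{-k}$, and set $r=r_0/2$ together with $\hat\beta=d(\chi\alpha)$, extended by zero outside $N_{r_0}$. By construction $\hat\beta$ is closed and equals $\beta$ on $N_r$. Writing $\hat\beta=d\chi\wedge\alpha+\chi\beta$ and combining the scaling $|\nabla^k\chi|\lesssim r_0^{-k}$ with the decay of $\nabla^k\alpha$ and the analogous bound $|\nabla^k\beta(x)|\lesssim |x|^{l+1-k}$ for $k\le l$, every summand contributes $|\nabla^m\hat\beta|_{C^0(N_s)}\lesssim r_0^{l+1-m}$ on the support of $\chi$. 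Thus the $C^l$-part of the norm is $O(r_0)$. For the H\"older seminorm $[\nabla^l\hat\beta]_{C^{0,a}(N_s)}$, the uniform bound $|\nabla^{l+1}\hat\beta|_{C^0(N_s)}\lesssim 1$ together with the fact that $\hat\beta$ is supported in $N_{r_0}$ gives $[\nabla^l\hat\beta]_{C^{0,a}(N_s)}\lesssim r_0^{1-a}$ by the standard Lipschitz-versus-sup interpolation at the scale $|x-y|=r_0$. Choosing $r_0$ small enough then yields $\|\hat\beta\|_{C^{l,a}\Omega^3(N_s)}<\epsilon$.

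The main obstacle will be verifying the pointwise decay $|\nabla^k\alpha(x)|\lesssim |x|^{l+2-k}$ for $k\le l+1$: one has to track how differentiating $K\beta$ redistributes derivatives onto $\beta$, invoke Taylor's theorem to convert $\nabla^i\beta(p)=0$ for $i\le l$ into $|\nabla^j\beta(x)|\lesssim |x|^{l+1-j}$ for $j\le l$, and handle the borderline case $k=l+1$ where one only has $\nabla^{l+1}\beta$ bounded but not vanishing at $p$. Once this is in place, the remaining scaling computation is routine bookkeeping, with the condition $a<1$ giving exactly the slack needed to absorb the uniform top-order bound into a H\"older seminorm that vanishes as $r_0\to 0$.
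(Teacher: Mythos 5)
Your argument is correct and takes a genuinely different route from the paper's. The paper first rescales $\beta_{|N_r}$ to $N_1$ with the dilation operator, applies the optimal-regularity Poincar\'e lemma from~\cite{CDK12} to produce a primitive on $N_1$ with norm control, extends that primitive to all of $M$ via the Whitney-type H\"older extension theorem~\cite[Theorem~16.11]{CDK12}, and then scales back; the only Taylor estimates it needs are on $\beta$ itself, and the vanishing rate is absorbed by the $r^{-l-a}$ coming from the dilation. You instead apply the classical Poincar\'e homotopy operator directly on the star-shaped ball $N_s$ to obtain a primitive $\alpha$, prove the pointwise decay $|\nabla^k\alpha(x)|\lesssim|x|^{l+2-k}$ for $k\le l+1$ (the extra factor of $|x|$ from the radial contraction and Taylor's theorem for $\beta$ together yield this, with the borderline $k=l+1$ using only boundedness of $\nabla^{l+1}\beta$), and then cut off with $\chi$ at scale $r_0$. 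Writing $\hat\beta=d\chi\wedge\alpha+\chi\beta$, the homogeneities line up so that $|\nabla^m\hat\beta|\lesssim r_0^{l+1-m}$ for $m\le l$ and $|\nabla^{l+1}\hat\beta|\lesssim 1$; the standard interpolation at scale $r_0$ then gives $[\nabla^l\hat\beta]_{C^{0,a}}\lesssim r_0^{1-a}$, and $a<1$ finishes the job. Both arguments are sound. Your version is more elementary and self-contained (no appeal to the extension theorem or the optimal-regularity Poincar\'e lemma), at the cost of the technical but routine lemma tracking how the homotopy operator interacts with the vanishing Taylor coefficients of $\beta$; the paper's version, by pushing the work onto $\beta$ alone via rescaling, avoids ever needing to control derivatives of the primitive and leans on the machinery of~\cite{CDK12} to do the heavy lifting.
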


\begin{proof}
Fix $r\in(0,s)$. Throughout the proof of the lemma, we use the same letter $c$ for several different constants appearing in our estimates. None of these constants depends on $\beta$ or~$r$. It will be convenient for us to denote $\beta_r=\beta_{|N_r}$. The form $A_{l,r}^{-1}\beta_r$ is closed. By the optimal regularity variant of the Poincar\'e lemma (see~\cite[Section~8.2]{CDK12}), there exists $\alpha\in C^{l+1,a}\Omega^2(N_1)$ such that $A_{l,r}^{-1}\beta_r=d\alpha$. Moreover, we may assume that 
\begin{align*}
\|\alpha\|_{C^{l+1,a}\Omega^2(N_1)}\le c \big\|A_{l,r}^{-1}\beta_r\big\|_{C^{l,a}\Omega^3(N_1)}.
\end{align*}
According to~\cite[Theorem~16.11]{CDK12}, there exists a bounded extension operator
\begin{align*}
\iota:C^{l+1,a}\Omega^2(N_1)\to C^{l+1,a}\Omega^2(M).
\end{align*}
Define $\hat\beta=d\hat\alpha$, where $\hat\alpha\in C^{l+1,a}\Omega^2(N_s)$ is given by the formula
\begin{align*}
\hat\alpha(x_1,\ldots,x_7)=(r\iota\alpha)(r^{-1}x_1,\ldots,r^{-1}x_7)
\end{align*}
in the coordinates $(x_1,\ldots,x_7)$. Clearly, $\hat\beta\in C^{l,a}\mathcal X^3(N_s)$ and $\beta_r=\hat\beta_{|N_r}$. It remains to show that $\|\hat\beta\|_{C^{l,a}\Omega^3(N_s)}<\epsilon$ for appropriately chosen~$r$.

Since $\nabla^i\beta$ vanish at~$p$ whenever $i\in\{0,\ldots,l\}$,
\begin{align*}
|\nabla^i\beta_r|_{C^0(N_r)}\le cr^{l+1-i}|\nabla^{l+1}\beta_r|_{C^0(N_r)}\le cr^{l+1-i}|\nabla^{l+1}\beta|_{C^0(N_s)}.
\end{align*}
The H\"older coefficient $[\nabla^l\beta_r]_{C^{0,a}(N_r)}$ satisfies
\begin{align*}
[\nabla^l\beta_r]_{C^{0,a}(N_r)}
\le cr^{1-a}|\nabla^{l+1}\beta|_{C^0(N_s)}.
\end{align*}
As a consequence,
\begin{align*}
\|\hat\beta\|_{C^{l,a}\Omega^3(N_s)}
&\le c\|\hat\alpha\|_{C^{l+1,a}\Omega^2(N_s)}
\\
&\le cr^{-l-a}\|\iota\alpha\|_{C^{l+1,a}\Omega^2(M)}
\le cr^{-l-a}\|\iota\| \big\|A_{l,r}^{-1}\beta_r\big\|_{C^{l,a}\Omega^3(N_1)}
\\
&\le cr^{-l-a}\|\iota\|\bigg(\sum_{i=1}^lr^i|\nabla^i\beta_r|_{C^0(N_r)}+r^{l+a}[\nabla^l\beta_r]_{C^{0,a}(N_r)}\bigg)
\\
&\le cr^{1-a}\|\iota\||\nabla^{l+1}\beta|_{C^0(N_s)}.
\end{align*}
Shrinking $r$ if necessary, we can ensure that $\|\hat\beta\|_{C^{l,a}\Omega^3(N_s)}<\epsilon$.
\end{proof}

\section{Proof of Theorem~\ref{thm_main}}\label{sec_proof}

With the results of Section~\ref{sec_cl_forms} at hand, we are ready to prove our main theorem. The canonical $G_2$-structure associated with the coordinates $(x_1,\ldots,x_7)$ is
\begin{align*}
\sigma_{{\rm can}}=\sum_{(i,j,k)\in I_+}e^{ijk}-\sum_{(i,j,k)\in I_-}e^{ijk},
\end{align*}
where $I_+=\{(1,2,3),(1,4,5),(1,6,7),(2,4,6)\}$ and $I_-=\{(2,5,7),(3,4,7),(3,5,6)\}$. We may choose the system $(x_1,\ldots,x_7)$ so that 
\begin{align}\label{coord_canon}
12^{2/3}\sigma_{{\rm can}}(p)=\sign(\eta)\,\eta(p)
\end{align}
with $\sign(\eta)=1$ if $\eta$ is positive and $\sign(\eta)=-1$ if $\eta$ is negative. 
Our first step is to solve equation~\eqref{eq_Poisson} at the point~$p$. 

\begin{lemma}\label{lem_pointsolve}
There exists a closed oriented $G_2$-structure
\begin{align*}
\sigma_0=\sum_{(i,j,k)\in I_+}\sigma_{0ijk}e^{ijk}-\sum_{(i,j,k)\in I_-}\sigma_{0ijk}e^{ijk},
\end{align*}
defined on a neighbourhood of~$p$, such that 
\begin{align*}
\Delta_{\sigma_0}\sigma_0(p)=\eta(p),\qquad \sigma_0(p)=\sign(\eta)\,\eta(p),
\end{align*}
and every component $\sigma_{0ijk}$ is a quadratic polynomial in $x_1,\ldots,x_7$ without linear terms.
\end{lemma}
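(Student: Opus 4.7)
My plan is to build $\sigma_0$ by prescribing its $2$-jet at~$p$. Write $\sigma_{0ijk}(x)=12^{2/3}+Q_{ijk}(x)$ for each $(i,j,k)\in I_+\cup I_-$, with $Q_{ijk}$ a homogeneous quadratic polynomial to be determined. The constant $12^{2/3}$ is forced by the requirement $\sigma_0(p)=12^{2/3}\sigma_{{\rm can}}(p)=\sign(\eta)\,\eta(p)$, which holds by~\eqref{coord_canon}. The absence of linear terms guarantees $\nabla\sigma_0(p)=0$, whence $g_{\sigma_0}(p)=12^{4/9}g(p)$, $\partial_a g_{\sigma_0}(p)=0$, and the Christoffel symbols of~$g_{\sigma_0}$ vanish at~$p$.

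The first substantive step is to analyse closedness. Since the seven triples in $I_+\cup I_-$ form the incidence structure of the Fano plane, every pair of indices in $\{1,\dots,7\}$ lies in exactly one such triple, and hence every 4-element subset of $\{1,\dots,7\}$ contains at most one. Inspecting $d\sigma_0$ component by component in $e^{abcd}$-basis then shows that $d\sigma_0=0$ is equivalent to $\partial_l\sigma_{0ijk}\equiv 0$ for every $(i,j,k)\in I_+\cup I_-$ and $l\notin\{i,j,k\}$. Consequently each $Q_{ijk}$ is a quadratic polynomial in $(x_i,x_j,x_k)$ alone, leaving six quadratic coefficients per triple, i.e.\ $42$ free parameters. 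Positivity of $\sigma_0(p)$, together with continuity, guarantees that $\sigma_0$ is an oriented $G_2$-structure on a neighbourhood of~$p$.

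Next I would evaluate $\Delta_{\sigma_0}\sigma_0(p)$ as a linear function of these $42$ parameters. Because $\sigma\mapsto g_\sigma$ is pointwise algebraic and $\nabla\sigma_0(p)=0$, one has $\partial^2 g_{\sigma_0}(p)=G'(\sigma_0(p))\bigl[\partial^2\sigma_0(p)\bigr]$, where $G'$ denotes the linearisation of the $G_2$-to-metric map. Together with the Weitzenb\"ock formula and the vanishing of the Christoffel symbols at~$p$, this expresses $(\Delta_{\sigma_0}\sigma_0)(p)$ as the sum of the rough-Laplacian contribution $-g_{\sigma_0}^{ab}(p)\partial_a\partial_b\sigma_0(p)$ and a ``curvature'' correction linear in $\partial^2\sigma_0(p)$; there is no constant part, since $\Delta_{\sigma_0}\sigma_0(p)$ plainly vanishes when $\sigma_0=12^{2/3}\sigma_{{\rm can}}$.

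Matching $\Delta_{\sigma_0}\sigma_0(p)=\eta(p)$ then reduces to a linear system of $35$ scalar equations in the $42$ free parameters. The main obstacle is verifying solvability: the $28$ components outside $I_+\cup I_-$ must vanish simultaneously with the seven components in $I_+\cup I_-$ matching the prescribed~$\eta(p)$. I would handle this by specialising to a symmetric ansatz such as $Q_{ijk}(x)=c_{ijk}(x_i^2+x_j^2+x_k^2)$, then computing the associated linear map explicitly using the Fano-plane incidence structure and verifying invertibility on the direction of $\sigma_{{\rm can}}(p)$. The surplus of parameters over equations leaves ample room to correct any degeneracies that arise.
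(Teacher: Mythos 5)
Your setup is sound, and your reduction of the closedness condition via the Fano-plane incidence is exactly right: $d\sigma_0=0$ forces each $Q_{ijk}$ to depend only on $x_i,x_j,x_k$, leaving a $42$-dimensional parameter space. Your observation that the Christoffel symbols of $g_{\sigma_0}$ vanish at $p$ and that $\Delta_{\sigma_0}\sigma_0(p)$ is linear in $\partial^2\sigma_0(p)$ is also correct. However, the proposal stops precisely where the mathematical content of the lemma begins. You say you would ``specialise to the symmetric ansatz $Q_{ijk}=c_{ijk}(x_i^2+x_j^2+x_k^2)$, compute the linear map, and verify invertibility on the direction of $\sigma_{\rm can}(p)$'' --- but none of this is carried out, and it is exactly this verification that constitutes the proof. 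The paper does this computation explicitly: for the symmetric ansatz with all $c_{ijk}$ equal (which gives the paper's $\theta$), it computes $g_\theta$ to second order at $p$, evaluates $d\star_\theta d\star_\theta\theta$ at $p$, and finds $\Delta_\theta\theta(p)=12\,\sigma_{\rm can}(p)$. Without that computation you have not established that the image of your linear map contains (a nonzero multiple of) $\sigma_{\rm can}(p)$, nor that the $28$ off-Fano components of $\Delta_{\sigma_0}\sigma_0(p)$ vanish for this ansatz.

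The closing sentence, ``the surplus of parameters over equations leaves ample room to correct any degeneracies that arise,'' is not a valid argument. A linear map from a $42$-dimensional space to a $35$-dimensional space need not be surjective, and even less does a count of dimensions tell you whether a \emph{specific} target vector (here $12^{2/3}\sigma_{\rm can}(p)$) lies in the image. The whole point of Lemma~\ref{lem_pointsolve} is that one can actually hit $\sigma_{\rm can}(p)$, and the paper proves this by exhibiting a concrete $\theta$ and computing $\Delta_\theta\theta(p)$; your outline would need to be completed with that (or an equivalent) calculation. In short: same strategy as the paper, correct preliminary structure, but the decisive computation is deferred and the surjectivity heuristic cannot replace it.
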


\begin{proof}
Assume that $\eta$ is positive. Consider the 3-form
\begin{align}\label{pt_sigma_def}
\theta=\sum_{(i,j,k)\in I_+}\big(1-x_i^2-x_j^2-x_k^2\big)e^{ijk}-\sum_{(i,j,k)\in I_-}\big(1-x_i^2-x_j^2-x_k^2\big)e^{ijk}.
\end{align}
Clearly, it is closed and oriented. We will show that $\Delta_{\theta}\theta(p)=12\sigma_{{\rm can}}(p)$. Rescaling $\theta$ will give us $\sigma_0$ and prove the lemma.

Let us find the Riemannian metric $g_\theta$ associated with $\theta$ in the neighbourhood of $p$ where $\theta$ is positive. We have
\begin{align}\label{metric_form}
6g_{\theta}(X,Y)\vol_\theta=(X\lrcorner\theta)\wedge (Y\lrcorner\theta)\wedge\theta,\qquad X,Y\in T_xM,~x\in M,
\end{align}
where $X$ and $Y$ are tangent to $M$ and $\vol_\theta$ is the volume form of~$g_\theta$. It is easy to see that $g_\theta$ is diagonal in the coordinates $(x_1,\ldots,x_7)$ and coincides with the Euclidean metric $g$ at~$p$. Each component $g_{\theta ii}$ satisfies
\begin{align*}
g_{\theta ii}(x)=1+l_i(x)+s_i(x)+O(|x|^3), \qquad x\to p,
\end{align*}
where
\begin{align*}
x=(x_1,\ldots,x_7),\qquad |x|=\sqrt{x_1^2+\cdots+x_7^2},
\end{align*}
and $l_i(x)$ and $s_i(x)$ are homogeneous polynomials of degrees~1 and~2, respectively. Taking determinants on both sides of~\eqref{metric_form}, we conclude that $$
l_1(x)+\cdots+l_7(x)=0
$$
for all~$x$. This fact and~\eqref{metric_form} imply that each $l_i$ must be identically~0. 

Observe that
$$\vol_\theta=\Big(1+\frac{s_1+\cdots+s_7}{2}+O(\left|x\right|^3)\Big)e^{1234567}.$$
Consequently,
$$6g_{\theta ii}\vol_\theta=(6+6s_i+3(s_1+\cdots+s_7)+O(\left|x\right|^3))e^{1234567}.$$
At the same time, direct computation shows that
$$
(e_i\lrcorner\theta)\wedge(e_i\lrcorner\theta)\wedge\theta=(6(1-x_1^2-\cdots-x_7^2-2x_i^2)+O(|x|^3))e^{1234567}.
$$
Substituting into~\eqref{metric_form} and summing over $i$, we conclude that
\begin{align*}
s_1+\cdots+s_7=-2(x_1^2+x_2^2+\cdots+x_7^2).
\end{align*}
Using~\eqref{metric_form} again yields $s_i(x)=-2x_i^2$ and
\begin{align*}
g_{\theta ii}(x)=1-2x_i^2+O(|x|^3).
\end{align*}

Our next step is to calculate the action of the Laplacian $\Delta_\theta$ on~$\theta$. 
Since $\theta$ is closed,
\begin{align*}
\Delta_\theta\theta=-d\star_\theta d\star_\theta\theta,
\end{align*} 
where $\star_\theta$ is the Hodge star operator of~$g_\theta$. We have
\begin{align*}
\star_\theta e^{ijklm}= \sqrt{\frac{g_{\theta ii}g_{\theta jj}g_{\theta kk}g_{\theta ll}g_{\theta mm}}{g_{\theta pp}g_{\theta qq}}}e^{pq},\qquad \star_\theta e^{ijk}= \sqrt{\frac{g_{\theta ii}g_{\theta jj}g_{\theta kk}}{g_{\theta aa}g_{\theta bb}g_{\theta cc}g_{\theta dd}}}e^{abcd}.
\end{align*}
The indices $p$, $q$, $a$, $b$, $c$ and $d$ on the right-hand sides are chosen so that $(i,j,k,l,m,p,q)$ and $(i,j,k,a,b,c,d)$ are even permutations of $\{1,2,3,4,5,6,7\}$. As a consequence,
\begin{align*}
\star_\theta e^{ijklm}&=(1+O(|x|^2))e^{pq}, \\
\star_\theta e^{ijk}&=(1-x_i^2-x_j^2-x_k^2+x_a^2+x_b^2+x_c^2+x_d^2+O(|x|^3))e^{abcd}.
\end{align*}
Together with the fact that $p$ is the origin in the coordinates~$(x_1,\ldots,x_7)$, this implies
\begin{align*}
d\star_\theta d\star_\theta\theta\,(p)=d\star d\star_\theta\theta\,(p),
\end{align*}
where $\star$ is the Hodge star operator of the Euclidean metric~$g$. Also,
\begin{align*}
d\star_\theta\theta&=d\star\sum_{(i,j,k)\in I_+}\big(\tilde\theta_{ijk}+x_a^2+x_b^2+x_c^2+x_d^2+O(|x|^3)\big)e^{ijk}
\\
&\hphantom{=}~-d\star\sum_{(i,j,k)\in I_-}\big(\tilde\theta_{ijk}+x_a^2+x_b^2+x_c^2+x_d^2+O(|x|^3)\big)e^{ijk}
\\
&=d\star\sum_{(i,j,k)\in I_+}\big(\tilde\theta_{ijk}+O(|x|^3)\big)e^{ijk}
-d\star\sum_{(i,j,k)\in I_-}\big(\tilde\theta_{ijk}+O(|x|^3)\big)e^{ijk}
\end{align*}
with $\tilde\theta_{ijk}=1-2x_i^2-2x_j^2-2x_k^2$. We conclude that
\begin{align*}
d\star_\theta d\star_\theta\theta\,(p)&=
d\star d\star\bigg(\sum_{(i,j,k)\in I_+}\tilde\theta_{ijk}e^{ijk}
-\sum_{(i,j,k)\in I_-}\tilde\theta_{ijk}e^{ijk}\bigg)(p)
\\
&=\Delta_g\bigg(\sum_{(i,j,k)\in I_+}\tilde\theta_{ijk}e^{ijk}
-\sum_{(i,j,k)\in I_-}\tilde\theta_{ijk}e^{ijk}\bigg)(p)=12\sigma_{{\rm can}}(p).
\end{align*}

Define $\sigma_0=12^{2/3}\theta$. One easily sees that
\begin{align*}
\Delta_{\sigma_0}\sigma_0(p)=12^{-1/3}\Delta_\theta\theta(p)=12^{2/3}\sigma_{{\rm can}}(p)=\eta(p)
\end{align*}
and $\sigma_0(p)=\eta(p)$.

Assume that $\eta$ is negative. Define $\theta$ by formula~\eqref{pt_sigma_def} with the signs reversed at $x_i^2$, $x_j^2$ and~$x_k^2$. Repeating the argument above with obvious modifications yields the desired result.
\end{proof}

The proof of Theorem~\ref{thm_main} uses a variant of the DeTurck trick; cf.~\cite[Chapter~5]{AB87}. In order to deal with the non-ellipticity of equation~\eqref{eq_Poisson}, we replace the 3-form $\eta$ on the right-hand side with the pullback of $\eta$ by a diffeomorphism. More precisely,
given closed $G_2$-structures $\zeta$ and $\phi$ on a closed geodesic ball~$N_s$ with $s>0$, let $T(\zeta,\phi)$ be the section of $TN_s\otimes T^*N_s\otimes T^*N_s$ defined by
$$
T(\zeta,\phi)=\nabla^{\zeta}-\nabla^\phi.
$$
Here, $\nabla^{\zeta}$ and $\nabla^\phi$ are the connections induced on the tensor bundle by the Levi-Civita connections of $g_{\zeta}$ and~$g_\phi$. Let $(T_{ij}^k(\zeta,\phi))_{i,j,k=1}^7$ be the components of $T(\zeta,\phi)$ in the coordinates $(x_1,\ldots,x_7)$. Consider the vector field
\begin{align*}
V(\zeta,\phi)=\sign(\eta)\sum_{i,j,k=1}^7\Big(\frac{15}{28}g_\phi^{ij}T_{ij}^k(\zeta,\phi)+\frac14g_\phi^{ik}T_{ji}^j(\zeta,\phi)\Big)e_k,
\end{align*}
first introduced in~\cite[Section~2]{BX11}. Denote by~$\varphi_{V(\zeta,\phi)}$ the flow of $V(\zeta,\phi)$ at time~1. Clearly, $\varphi_{V(\zeta,\phi)}$ is orientation-preserving. Moreover, $V(\zeta,\zeta)=0$, and $\varphi_{V(\zeta,\zeta)}$ is the identity. We will construct 3-forms $\sigma_1$ and $\sigma_*$ such that
\begin{align}\label{eq_DeT=0}
\Delta_{\sigma_*}\sigma_*=\varphi^*_{V(\sigma_1,\sigma_*)}\eta.
\end{align}
In light of the diffeomorphism-invariance of the Laplacian, setting
\begin{align}\label{sigma_gauged_final}
\sigma=\big(\varphi^{-1}_{V(\sigma_1,\sigma_*)}\big)^*\sigma_*
\end{align}
will complete the proof of Theorem~\ref{thm_main}.

Let us linearise the operator on the left-hand side of~\eqref{eq_Poisson} near the $G_2$-structure~$\phi$. As shown in~\cite[Section~2]{BX11},
for every closed 3-form~$\psi$ on~$N_s$,
\begin{align}\label{lin_Lapl}
\frac\partial{\partial t}\Delta_{\phi+t\psi}(\phi+t\psi)|_{t=0}=-\Delta_{\phi}\psi+\sign(\eta)\,d(V'_{\zeta,\phi}(\psi)\lrcorner\phi)+\Psi_{\zeta,\phi}(\psi).
\end{align}
In this formula,
\begin{align*}
V_{\zeta,\phi}'(\psi)=\frac\partial{\partial t}V(\zeta,\phi+t\psi)|_{t=0}.
\end{align*}
The components of $V_{\zeta,\phi}'(\psi)$ in the coordinates $(x_1,\ldots,x_7)$ at a point $x$ are linear functions of the components of $\psi(x)$ and~$\nabla\psi(x)$ with coefficients determined by $\zeta$ and~$\phi$. The map $\Psi_{\zeta,\phi}$ takes closed 3-forms to closed 3-forms. The components of $\Psi_{\zeta,\phi}(\psi)$ at $x$ depend linearly on~$\psi(x)$ and~$\nabla\psi(x)$. Even though $\zeta$ appears in~\eqref{lin_Lapl}, the sum on the right-hand side is the same for all choices of~$\zeta$.

Our next step is to produce~$\sigma_1$. We need this 3-form to solve equation~\eqref{eq_Poisson} to first order at the point~$p$. Our construction will require another application of the DeTurck trick. We carry it out in Lemma~\ref{lem_h3} below. Choose numbers $s,q\in(0,1)$. Let $B_q$ be the closed ball in $C_0^{3,a}\mathcal X^3(N_s)$ of radius $q$ centred at~0. In what follows, we fix a $G_2$-structure $\sigma_0$ as in Lemma~\ref{lem_pointsolve}. 
We may assume that $s$ and $q$ are such that the 3-form $\sigma_0+\psi$ is well-defined and positive whenever $\psi$ lies in~$B_q$. Taylor's theorem, equality~\eqref{lin_Lapl} and Cartan's formula imply
\begin{align}\label{eq_DeTurck}
\Delta_{\sigma_0+\psi}(\sigma_0&+\psi)-\varphi^*_{V(\sigma_0,\sigma_0+\psi)}\eta \notag
\\
&=\Delta_{\sigma_0}\sigma_0-\eta-\Delta_{\sigma_0}\psi
+\mathcal L_{V'_{\sigma_0,\sigma_0}(\psi)}(\sign(\eta)\sigma_0-\eta)
+\Psi_{\sigma_0,\sigma_0}(\psi)+J_{\sigma_0,\sigma_0}(\psi)
\end{align}
for $\psi\in B_q$. On the right-hand side, $J_{\sigma_0,\sigma_0}$ is a map from $B_q$ to $C_0^{1,a}\mathcal X^3(N_s)$ satisfying
\begin{align}\label{J_pw_bd}
|J_{\sigma_0,\sigma_0}(\psi)(x)|_g+|\nabla J_{\sigma_0,\sigma_0}(\psi)(x)|_g&\le c(|\psi(x)|_g+|\nabla \psi(x)|_g+|\nabla^2\psi(x)|_g)^2\notag
\\
&\hphantom{=}~+c(|\psi(x)|_g+|\nabla \psi(x)|_g+|\nabla^2\psi(x)|_g)|\nabla^3\psi(x)|_g
\end{align}
for $x\in N_s$ with $c$ independent of $s$ and~$q$ (the notation $|\cdot|_g$ here stands for the norm of a tensor induced by the Euclidean metric~$g$).

\begin{lemma}\label{lem_h3}
There exists a closed oriented $G_2$-structure $\sigma_1$, defined on a neighbourhood of~$p$, such that 
\begin{align}\label{first_order_at_p}
\Delta_{\sigma_1}\sigma_1(p)=\eta(p),\qquad \nabla\Delta_{\sigma_1}\sigma_1(p)=\nabla\eta(p),\qquad \sigma_1(p)=\sign(\eta)\,\eta(p).
\end{align}
\end{lemma}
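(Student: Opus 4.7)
The plan is to build $\sigma_1$ via a DeTurck trick applied to $\sigma_0$. First I search for a closed polynomial $3$-form $\psi$ whose $2$-jet at $p$ vanishes, set $\sigma_* := \sigma_0 + \psi$, let $\varphi := \varphi_{V(\sigma_0,\sigma_*)}$, and define $\sigma_1 := (\varphi^{-1})^*\sigma_*$. Because $V(\sigma_0,\sigma_0) \equiv 0$ and the components of $V$ depend only on the $1$-jet of its second argument, the conditions $\psi(p) = \nabla\psi(p) = \nabla^2\psi(p) = 0$ give $V(\sigma_0,\sigma_*)(p) = 0$ and $\nabla V(\sigma_0,\sigma_*)(p) = 0$, so $\varphi$ fixes $p$ with identity linearization. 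Consequently $\sigma_1(p) = \sigma_*(p) = \sigma_0(p) = \sign(\eta)\eta(p)$, and diffeomorphism invariance of the Hodge Laplacian yields
\[
\Delta_{\sigma_1}\sigma_1 - \eta = (\varphi^{-1})^*\bigl(\Delta_{\sigma_*}\sigma_* - \varphi^*\eta\bigr) = (\varphi^{-1})^*F(\psi),
\]
where $F(\psi) := \Delta_{\sigma_*}\sigma_* - \varphi^*\eta$. Pullback by $\varphi^{-1}$ preserves the $1$-jet at $p$ (a form whose value and gradient vanish at a fixed point with identity linearization pulls back to one with the same property), so the whole problem reduces to arranging that $F(\psi)$ vanishes to first order at $p$.

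By identity~\eqref{eq_DeTurck},
\[
F(\psi) = (\Delta_{\sigma_0}\sigma_0 - \eta) + L_0\psi + J_{\sigma_0,\sigma_0}(\psi), \qquad L_0\psi := -\Delta_{\sigma_0}\psi + \mathcal{L}_{V'_{\sigma_0,\sigma_0}(\psi)}(\sign(\eta)\sigma_0 - \eta) + \Psi_{\sigma_0,\sigma_0}(\psi).
\]
Lemma~\ref{lem_pointsolve} gives $(\Delta_{\sigma_0}\sigma_0 - \eta)(p) = 0$; the vanishing $2$-jet of $\psi$, combined with~\eqref{J_pw_bd}, forces $L_0\psi(p) = J_{\sigma_0,\sigma_0}(\psi)(p) = 0$. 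So $F(\psi)(p) = 0$ holds automatically. For the gradient at $p$, I verify a sequence of cancellations: the equalities $g_{\sigma_0}(p) = g$ and $\nabla g_{\sigma_0}(p) = 0$ established in the proof of Lemma~\ref{lem_pointsolve} give $\nabla(\Delta_g - \Delta_{\sigma_0})\psi(p) = 0$; closedness of $\sigma_0$ and $\eta$, the identity $\sign(\eta)\sigma_0(p) - \eta(p) = 0$, and the vanishing of $V'_{\sigma_0,\sigma_0}(\psi)(p)$ and $\nabla V'_{\sigma_0,\sigma_0}(\psi)(p)$ force $\nabla \mathcal{L}_{V'_{\sigma_0,\sigma_0}(\psi)}(\sign(\eta)\sigma_0 - \eta)(p) = 0$; the linear dependence of $\Psi_{\sigma_0,\sigma_0}(\psi)$ on the $1$-jet of $\psi$ gives $\nabla \Psi_{\sigma_0,\sigma_0}(\psi)(p) = 0$; and~\eqref{J_pw_bd} gives $\nabla J_{\sigma_0,\sigma_0}(\psi)(p) = 0$. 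Altogether,
\[
\nabla F(\psi)(p) = \nabla(\Delta_{\sigma_0}\sigma_0 - \eta)(p) - \nabla\Delta_g\psi(p).
\]

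It thus remains to exhibit a closed polynomial $3$-form $\psi$ with vanishing $2$-jet at $p$ satisfying $\nabla\Delta_g\psi(p) = \nabla(\Delta_{\sigma_0}\sigma_0 - \eta)(p)$. Let $P_1$ be the first-order Taylor polynomial at $p$ of the closed $3$-form $\Delta_{\sigma_0}\sigma_0 - \eta$; as in the proof of Lemma~\ref{lem_Hodge_inv} the polynomial $P_1$ is closed, and $P_1(p) = 0$ makes it a homogeneous linear polynomial $3$-form in the given coordinates. The polynomial Poincar\'e lemma, in the form $\beta := \tfrac{1}{5}(x \lrcorner P_1)$, produces a homogeneous degree-$2$ polynomial $2$-form $\beta$ with $d\beta = P_1$. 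Since the Euclidean scalar Laplacian is surjective on homogeneous polynomials of each degree, I solve $\Delta_g \alpha = \beta$ componentwise for a homogeneous degree-$4$ polynomial $2$-form $\alpha$. Then $\psi := d\alpha$ is closed, homogeneous of degree $3$, vanishes to order $3$ at $p$, and $\Delta_g \psi = d(\Delta_g \alpha) = d\beta = P_1$, so $\nabla\Delta_g\psi(p) = \nabla P_1(p) = \nabla(\Delta_{\sigma_0}\sigma_0 - \eta)(p)$, as required.

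The main obstacle is the string of cancellations in the second paragraph, especially the verification that the Lie-derivative correction $\mathcal{L}_{V'_{\sigma_0,\sigma_0}(\psi)}(\sign(\eta)\sigma_0 - \eta)$ contributes nothing to the $1$-jet of $L_0\psi$ at $p$; this is precisely where the DeTurck adjustment pays off, cancelling a $\nabla^3\psi(p)$-linear term that would otherwise appear. Once that calculation is in place, the polynomial construction of $\psi$ is elementary.
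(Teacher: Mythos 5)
Your proposal follows essentially the same route as the paper: apply the DeTurck trick to $\sigma_0$, add a closed exact $3$-form $\psi=d\alpha$ with vanishing $2$-jet at $p$, verify via~\eqref{eq_DeTurck} and~\eqref{J_pw_bd} that only the principal term of the linearisation survives in $\nabla F(\psi)(p)$, and then choose $\psi$ polynomially so that this term cancels $\nabla(\Delta_{\sigma_0}\sigma_0-\eta)(p)$. The paper writes the quartic $2$-form (its $\tau_*$) explicitly and checks the cancellation by hand; you instead invoke the polynomial Poincar\'e lemma and surjectivity of the Euclidean Laplacian on homogeneous polynomials, which is a cleaner way of saying the same thing and is a legitimate variant.

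Two small points to fix. First, in the polynomial Poincar\'e lemma for a \emph{closed} $3$-form $P_1$ with homogeneous degree-$1$ coefficients, $\mathcal L_E P_1=(1+3)P_1$, so the primitive is $\tfrac14\,x\lrcorner P_1$, not $\tfrac15\,x\lrcorner P_1$. Second, the assertion $g_{\sigma_0}(p)=g$ does not match the paper's own normalisation: with $\sigma_0=12^{2/3}\theta$ and $g_\theta(p)=g$, the scaling $g_{\lambda\phi}=\lambda^{2/3}g_\phi$ gives $g_{\sigma_0}(p)=12^{4/9}g$, and the paper later states that the leading coefficients of $\Delta_{\sigma_0}$ at $p$ agree with those of $12^{-1}\Delta_g$ rather than $\Delta_g$. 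This only changes your target equation to $\nabla\Delta_g\psi(p)=c\,\nabla(\Delta_{\sigma_0}\sigma_0-\eta)(p)$ for the appropriate constant $c$, which your construction handles just as well, so the argument survives; you should, however, track the correct constant rather than asserting $g_{\sigma_0}(p)=g$.
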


\begin{proof}
By the Poincar\'e lemma, we can find a 2-form $\tau$ such that $\Delta_{\sigma_0}\sigma_0-\eta=d\tau$. Let $(\tau_{ij})_{i,j=1}^7$ be the components of $\tau$ in the coordinates $(x_1,\ldots,x_7)$. Define
\begin{align*}
\tau_*=\sum_{i,j=1}^7\sum_{k,l=1}^7\frac2{1+3\delta_k^l}\frac{\partial^2\tau_{ij}}{\partial x_k\partial x_l}(p)x_k^3x_le^{ij},
\end{align*}
where $\delta_k^l$ is the Kronecker symbol. It will be clear from the arguments below that the second derivatives of $\tau$ coincide with those of $\Delta_{\sigma_0}\tau_*$ at~$p$. We will demonstrate that the $G_2$-structure $\sigma_{1*}=\sigma_0+d\tau_*$ solves the equations
\begin{align}\label{eq_pw_or1_DeT}
\Delta_{\sigma_{1*}}\sigma_{1*}(p)=\varphi^*_{V(\sigma_0,\sigma_{1*})}\eta(p),\qquad \nabla\Delta_{\sigma_{1*}}\sigma_{1*}(p)=\nabla\varphi^*_{V(\sigma_0,\sigma_{1*})}\eta(p).
\end{align}
Pulling back $\sigma_{1*}$ by $\varphi_{V(\sigma_0,\sigma_{1*})}^{-1}$ will produce~$\sigma_1$ and prove the lemma.

The definition of $\sigma_0$ yields 
$$(\Delta_{\sigma_0}\sigma_0-\eta)(p)=0.$$
Recall that $p$ is the origin in the coordinates~$(x_1,\ldots,x_7)$. Since $\Delta_{\sigma_0}$ is a second-order linear differential operator and the components of $d\tau_*$ are cubic, the 3-form $\Delta_{\sigma_0}d\tau_*$ vanishes at~$p$. For similar reasons,
\begin{align*}
\mathcal L_{V'_{\sigma_0,\sigma_0}(d\tau_*)}(\sign(\eta)\sigma_0-\eta)(p)=0
\end{align*}
and $\Psi_{\sigma_0,\sigma_0}(d\tau_*)(p)=0$. Estimate~\eqref{J_pw_bd} implies that $J_{\sigma_0,\sigma_0}(d\tau_*)(p)=0$. These observations and~\eqref{eq_DeTurck} yield the first equality in~\eqref{eq_pw_or1_DeT}.

Since $\sigma_0(p)=\sign(\eta)\,\eta(p)$ and the components of $d\tau_*$ are cubic,
\begin{align*}
\nabla(\mathcal L_{V_{\sigma_0,\sigma_0}'(d\tau_*)}(\sign(\eta)\sigma_0-\eta))(p)=0
\end{align*}
and $\nabla(\Psi_{\sigma_0,\sigma_0}(d\tau_*))(p)=0$. We use~\eqref{J_pw_bd} as above to conclude that $\nabla J_{\sigma_0,\sigma_0}(d\tau_*)(p)=0$. Formula~\eqref{coord_canon}
implies that the leading coefficients of $\Delta_{\sigma_0}$ coincide with those of the operator $12^{-1}\Delta_g$ at the point~$p$. Consequently,
\begin{align*}
\nabla&\big(\Delta_{\sigma_{1*}}\sigma_{1*}-\varphi_{V(\sigma_0,\sigma_{1*})}^*\eta\big)(p)
=\nabla(\Delta_{\sigma_0}\sigma_0-\eta-\Delta_{\sigma_0}d\tau_*)(p)
=\nabla d(\tau-\Delta_{\sigma_0}\tau_*)(p)
\\
&=
\nabla d\bigg(\sum_{i,j=1}^7\sum_{k,l=1}^7\frac{\partial^2\tau_{ij}}{\partial x_k\partial x_l}(p)\bigg(\frac1{1+\delta_k^l}x_kx_l-\frac1{6(1+3\delta_k^l)}\sum_{m=1}^7\frac{\partial^2}{\partial x_m^2}x_k^3x_l\bigg)e^{ij}\bigg)(p)=0.
\end{align*}
Thus, $\sigma_{1*}$ satisfies the second equality in~\eqref{eq_pw_or1_DeT}.

Set $\sigma_1=\big(\varphi_{V(\sigma_0,\sigma_{1*})}^{-1}\big)^*\sigma_{1*}$. By definition, $V(\sigma_0,\sigma_{1*})$ involves no derivatives of $\sigma_{1*}$ of order above one. Since the components of $d\tau_*$ are cubic and $V(\sigma_0,\sigma_0)=0$, each component of $V(\sigma_0,\sigma_{1*})$ equals $O(|x|^2)$ as $x\to p$. As a consequence, the map $\varphi_{V(\sigma_0,\sigma_{1*})}$ does not move the point~$p$ and is, in fact, a diffeomorphism between neighbourhoods of~$p$. We conclude that
\begin{align*}
\Delta_{\sigma_1}\sigma_1(p)&=\big(\varphi_{V(\sigma_0,\sigma_{1*})}^{-1}\big)^*\Delta_{\sigma_{1*}}\sigma_{1*}(p)=\eta(p).
\end{align*}
A similar but slightly lengthier computation shows that the second formula in~\eqref{first_order_at_p} must hold. Differentiating the flow equation, we find that the derivative of $\varphi_{V(\sigma_0,\sigma_{1*})}$ at $p$ is the identity. This implies
\begin{align*}
\sigma_1(p)=\sigma_{1*}(p)=\sigma_0(p)=\sign(\eta)\,\eta(p).
\end{align*}
\end{proof}

Fix a $G_2$-structure $\sigma_1$ as in Lemma~\ref{lem_h3}. Shrinking $s$ and $q$ if necessary, we may assume that the 3-form $\sigma_1+\psi$ is well-defined and positive whenever $\psi$ lies in~$B_q$. By analogy with equation~\eqref{eq_DeTurck}, we find
\begin{align}\label{eq_near_sigma1}
\Delta_{\sigma_1+\psi}&(\sigma_1+\psi)-\varphi^*_{V(\sigma_1,\sigma_1+\psi)}\eta \notag
\\ &=\rho-\Delta_{\sigma_1}\psi+\mathcal L_{V_{\sigma_1,\sigma_1}'(\psi)}(\sign(\eta)\,\sigma_1-\eta) 
+\Psi_{\sigma_1,\sigma_1}(\psi)+J_{\sigma_1,\sigma_1}(\psi)
\end{align}
for $\psi\in B_q$. On the right-hand side,
\begin{align}\label{rho_def}
\rho=\Delta_{\sigma_1}\sigma_1-\eta,
\end{align}
and the map $J_{\sigma_1,\sigma_1}$ satisfies
\begin{align}\label{J_bound}
\|J_{\sigma_1,\sigma_1}(\psi)\|_{C^{1,a}\Omega^3(N_s)}&\le b\|\psi\|^2_{C^{3,a}\Omega^3(N_s)},\notag
\\
\|J_{\sigma_1,\sigma_1}(\psi_1)-J_{\sigma_1,\sigma_1}(\psi_2)\|_{C^{1,a}\Omega^3(N_s)}&\le bq\|\psi_1-\psi_2\|_{C^{3,a}\Omega^3(N_s)},\qquad \psi,\psi_1,\psi_2\in B_q,
\end{align}
with $b$ independent of $s$ and~$q$. We will now use~\eqref{eq_near_sigma1} to construct a 3-form~$\sigma_*$ satisfying~\eqref{eq_DeT=0}.

The equality $\sigma_1(p)=\sign(\eta)\,\eta(p)$ and~\eqref{coord_canon} imply that the highest-order coefficients of the linear differential operator
$$
\psi\mapsto\Delta_{\sigma_1}\psi-12^{-1}\Delta_g\psi-\mathcal L_{V_{\sigma_1,\sigma_1}'(\psi)}(\sign(\eta)\,\sigma_1-\eta)
$$
with respect to $(x_1,\ldots,x_7)$ vanish at~$p$. Let us apply Lemma~\ref{lem_lin_inv} with $\gamma=12^{-1}$ and $K$ given by
\begin{align*}
K\psi=
\Delta_{\sigma_1}\psi-12^{-1}\Delta_g\psi-\mathcal L_{V_{\sigma_1,\sigma_1}'(\psi)}(\sign(\eta)\,\sigma_1-\eta)&-\Psi_{\sigma_1,\sigma_1}(\psi).
\end{align*}
Shrinking $s$ if necessary, we obtain a bounded operator
$$P:C_0^{1,a}\mathcal X^3(N_s)\to C_0^{3,a}\mathcal X^3(N_s)$$
such that
\begin{align*}
\Delta_{\sigma_1}P\psi-\mathcal L_{V_{\sigma_1,\sigma_1}'(P\psi)}(\sign(\eta)\,\sigma_1-\eta)&-\Psi_{\sigma_1,\sigma_1}(P\psi)=\psi
\end{align*}
for all $\psi\in C_0^{1,a}\mathcal X^3(N_s)$.

Suppose that $q<\frac1{4b\|P\|}$.
By Lemma~\ref{lem_exten}, for the 3-form $\rho$ given by~\eqref{rho_def}, there exist $r\in(0,s)$ and $\hat\rho\in C_0^{1,a}\mathcal X^3(N_s)$ such that $\rho_{|N_r}=\hat\rho_{|N_r}$ and
$$
\|\hat\rho\|_{C^{1,a}\Omega^3(N_s)}<\frac q{2\|P\|}.
$$
Formulas~\eqref{J_bound} imply that the map 
$$
\mathfrak C:B_q\to B_q
$$
defined by
\begin{align*}
\mathfrak C(\psi)=P(\hat\rho+J_{\sigma_1,\sigma_1}(\psi)),\qquad\psi\in B_q,
\end{align*}
is a contraction on the closed ball~$B_q$. Denote the unique fixed point of this map by~$\psi_*$. Applying the operator $12^{-1}\Delta_g+K$ to both sides of the equality $\mathfrak C(\psi_*)=\psi_*$, we conclude that
\begin{align}\label{eq_psi_star_final}
\hat\rho-\Delta_{\sigma_1}\psi_*+\mathcal L_{V_{\sigma_1,\sigma_1}'(\psi_*)}(\sign(\eta)\,\sigma_1-\eta)
+\Psi_{\sigma_1,\sigma_1}(\psi_*)+J_{\sigma_1,\sigma_1}(\psi_*)=0.
\end{align}
In light of~\eqref{eq_near_sigma1} and the fact that $\rho_{|N_r}=\hat\rho_{|N_r}$, this yields formula~\eqref{eq_DeT=0} in $N_r$ with $\sigma_*=\sigma_1+\psi_*$.

Let us show that $\psi_*$ is smooth near~$p$. One may re-write~\eqref{eq_psi_star_final} as a system of second-order equations for the components $(\psi_{*ijk})_{i,j,k=1}^7$ of $\psi_*$ in the coordinates~$(x_1,\ldots,x_7)$. We differentiate each of these equations in $x_n$ for $n=1,\ldots,7$. As a result, noting that $\psi_*\in C_0^{3,a}\mathcal X^3(N_s)$, we find that $\big(\frac{\partial\psi_{*ijk}}{\partial x_n}\big)_{i,j,k=1}^3$ solve a quasilinear system that is elliptic near~$p$. Well-known regularity properties of such systems (see, e.g.,~\cite[Section~8.5]{LS68}) and a standard bootstrapping argument imply that $(\psi_{*ijk})_{i,j,k=1}^7$ are smooth.

Since $\psi_*\in C_0^{3,a}\mathcal X^3(N_s)$, the map $\varphi_{V(\sigma_1,\sigma_*)}$ does not move the point~$p$. In fact, this map is an orientation-preserving diffeomorphism between neighbourhoods of~$p$. Shrinking $r$ if necessary, one can easily conclude that the $G_2$-structure $\sigma$ given by~\eqref{sigma_gauged_final} is a smooth solution to~\eqref{eq_Poisson} in $U=\varphi_{V(\sigma_1,\sigma_*)}(N_r)$.


\begin{thebibliography}{99}

\bibitem{AB87}
A. Besse, Einstein manifolds, Springer-Verlag, Berlin, 1987.

\bibitem{BX11}
R. Bryant, F. Xu, \textit{Laplacian flow for closed $G_2$-structures: short time behaviour}, arXiv:1101.2004v1, 2011.

\bibitem{CDK12}
G. Csat\'o, B. Dacorogna, O. Kneuss, The pullback equation
for differential forms, Birkh\"auser, Boston, 2012.

\bibitem{DDT81}
D.M. DeTurck, \textit{Existence of metrics with prescribed Ricci curvature: local theory}, Invent. Math.~65 (1981/82) 179--207.

\bibitem{DJ00}
D. Joyce, Compact manifolds with special holonomy,
Oxford University Press, Oxford, 2000.

\bibitem{LS68}
O.A. Ladyzhenskaya, N.N. Ural'tseva, Linear and quasilinear elliptic equations, Academic Press, New York--London, 1968.

\bibitem{JL20}
J.D. Lotay, \textit{Geometric flows of $G_2$ structures}, in: Lectures and surveys on $G_2$-manifolds and related topics (S.~Karigiannis \emph{et~al.}, eds), Springer, New York, 2020, 113--140.

\end{thebibliography}
\end{document}